\theoremstyle{plain}
\newtheorem{theorem}{Theorem}[section]
\newtheorem{lemma}[theorem]{Lemma}
\newtheorem{corollary}[theorem]{Corollary}
\theoremstyle{definition}
\newtheorem{definition}[theorem]{Definition}
\theoremstyle{remark}
\tikzset{negated/.style={
        decoration={markings,
            mark= at position 0.5 with {
                \node[transform shape] (tempnode) {${\scriptstyle\setminus} $};
            }
        },
        postaction={decorate}
    }
}
\tikzset{degil/.style={
            decoration={markings,
            mark= at position 0.5 with {
                  \node[transform shape] (tempnode) {$\backslash$};
                  }
              },
              postaction={decorate}
}
}
\title[Switched server systems whose parameters are  normal numbers in base $4$]{Switched server systems whose  parameters \\ are normal numbers in base $4$}
\author{Andr\'e do Amaral Antunes}
\address{Departamento de Computa\c c\~ao e Matem\'atica, Faculdade de Filosofia,
	Ci\^encias e Letras, Universidade de S\~ao Paulo, Ribeir\~ao Preto, SP,
	14040-901, Brazil.}
\email{antunes.andre@usp.br}
\author{Yann Bugeaud}
\address{ Institut de Recherche Math\'ematique Avanc\'ee, U.M.R. 7501,
	Universit\'e de Strasbourg et C.N.R.S.,
	7, rue Ren\'e Descartes,
	67084 Strasbourg, France}
\email{bugeaud@math.unistra.fr}
\author{Benito Pires}
\address{Departamento de Computa\c c\~ao e Matem\'atica, Faculdade de Filosofia,
	Ci\^encias e Letras, Universidade de S\~ao Paulo, Ribeir\~ao Preto, SP,
	14040-901, Brazil.}
\email{benito@usp.br}
\subjclass[2010]{Primary 37E05, 37C25; Secondary 37B10. }
\keywords{switched server system, piecewise contraction, topological dynamics}
\date{}
\keywords{Switched server system, piecewise contraction, topological dynamics}
\begin{document}

\maketitle


\marginsize{2.5cm}{2.5cm}{1cm}{2cm}

\begin{abstract} Switched server systems are mathematical models of manufacturing, traffic and queueing systems. Recently, it was proved in (Eur. J. Appl. Math. 31(4) (2020), pp. 682-708) that there exist switched server systems with $3$ buffers (tanks), a server, filling rates $\rho_1=\rho_2=\rho_3=\frac13$ and parameters $d_1, d_2, d_3>0$ whose global attractor is a fractal set. In this article, we prove that if 
$x_1$ in $(0,\frac13)$, $x_2$ in $(\frac13,\frac23)$ and $x_3$ in $(\frac23,1)$   
are rational numbers or normal numbers in base $4$ (or more generally, rich numbers to base $4$) and $(d_1,d_2,d_3)$ is the vector with positive entries satisfying
$$d_1=\frac{1}{3x_1}-1,\quad d_2=\frac{2-3x_2}{3x_2-1}, \quad d_3=\frac{3-3x_3}{3x_3-2},$$
then the corresponding switched server has no fractal attractor. More precisely, the Poincar\'e map of the system has a finite global attractor. The approach we use is to study the topological dynamics of a family of piecewise $\lambda$-affine contractions that includes the Poincar\'e map of the switched server system as a particular case.

     \end{abstract}
 
  

\maketitle

\section{Introduction}

This article contains two types of contributions to the area of dynamical systems. The first contribution is in the research topic named one-dimensional dynamics, dynamics of interval maps with gaps or dynamics of piecewise smooth interval maps. In this regard, we study the topological dynamics of $n$-interval piecewise $\pm\frac{1}{\beta}$-affine contractions  \linebreak $f:[0,1)\to [0,1)$ considering arbitrary integers $\beta, n\ge 2$. Our main contribution here is an explicit condition on the parameters of $f$ which makes it possible to construct concrete examples with a finite $\omega$-limit set $\omega(f)$. The second contribution is an application of a special case of the first result (with $\beta=2$ and $n=4$) to understand the dynamics of switched server systems with $3$ buffers and $1$ server, which are mathematical models of manufacturing, traffic and queueing systems. We provide more details along the specialised forthcoming subsections.

\subsection{Piecewise $\lambda$-affine contractions}

Throughout this article, we denote  by  $f:I\to I$ a self-map of the interval $I=[0,1)$.  Given $-1<\lambda<1$,  we say that  $f$ is an \textit{$n$-interval piecewise $\lambda$-affine contraction} if there exist points $0=x_0<x_1<\cdots<x_{n-1}<x_n=1$ and real numbers $a_1,a_2,\ldots,a_n$ such that
\begin{equation}\label{npc}
f(x)=\lambda x + a_i\quad\textrm{for all}\quad x\in \big[x_{i-1},x_i\big)\quad\textrm{and}\quad 1\le i\le n.
\end{equation} 
 We call the non-empty set
\begin{equation}\label{olset}
 \omega(f)=\bigcup_{x\in I}\omega(x,f),\quad\textrm{where}\quad\omega(x,f)=\bigcap_{r\ge 0}\overline{\bigcup_{k\ge r}\{ f^k(x)\}},
\end{equation}
 the \textit{$\omega$-limit set} or \textit{global attractor of $f$}.
 
 An important class of piecewise $\lambda$-affine contractions arises from the self-map \linebreak $f:\mathbb{T}^1\to \mathbb{T}^1$ of the  circle $\mathbb{T}^1$ defined by $f(x)=\{\lambda x + a\}$, where $0<\lambda<1$, $0<a<1$ and $\{\cdot\}$ denotes the fractional part of a real number. Writing $f$ as an interval map leads to the $2$-interval piecewise $\lambda$-affine contraction $f:I\to I$ defined by
 \begin{equation}\label{2i}
 f(x)=\begin{cases} \lambda x + a & \textrm{if} \quad x\in \left[0, \frac{1-a}{\lambda} \right)\\[0.1em]
  \lambda x + a - 1 & \textrm{if} \quad x\in \left[ \frac{1-a}{\lambda},1 \right).   
 \end{cases}
 \end{equation}
 
 The topological dynamics of the $2$-interval piecewise $\lambda$-affine contractions given by 
  \eqref{2i} was studied by many authors by means of a rotation number approach and Farey trees (see \cite{MR1240802,MR1683629,MR4030545,MR3815128}). In particular, it is known that there exists a dichotomy: the global attractor of $f$ is either a finite set (generic case) or a Cantor set. The study of the global attractor of a  piecewise (affine or not affine) contraction $f$ defined on the union $E_0\cup E_1$ of two complete metric spaces and such that $f\vert_{E_0}$ and $f\vert_{E_1}$    
  are contractions, was accomplished in \cite{MR1018928}.
  
 The topological dynamics of $n$-interval piecewise $\lambda$-affine contractions, with $n\ge 2$, was considered in \cite{MR3820005} and in \cite{p1}. In contrast to the case  $n=2$ where a dichotomy is present, when $n\ge 3$ the global attractor can be of three types: a finite set (generic case), finitely many Cantor sets, or the union of a finite set and finitely many Cantor sets (see \cite[Theorem 1.1]{ACECOG2020}). Piecewise $\lambda$-affine maps with Cantor attractors were constructed in \cite[Corollary 2.5 and Theorem 2.7]{MR4030596} and in \cite{MR4120256}. It is also worth mentioning that every injective piecewise (affine or not affine) contraction is topologically conjugate to a piecewise affine contraction (see \cite[Theorem 1.2]{MR3394114}).
 
 Although the results provided in \cite{p1,MR3225875,MR3820005} are far comprehensive, they are not appropriate for concrete applications since it is not possible to exhibit parameters that lie in the full measure set of generic parameters. In this respect, the novelty and advantage of our result is that we can provide explicit (rational and irrational) parameters $\lambda$, $a_i$ and $x_i$ of $f$ in \eqref{npc} that make $\omega(f)$ finite. The approach we use to prove the results connects in a natural way piecewise contractions to the fascinating world of  $\beta$-transformations. 
 
\subsection{Switched server systems}\label{sss} 

Switched server systems are mathematical models of manufacturing, traffic and queueing systems. We consider here 
the same model studied in \cite{MR4120256}, which turns out to be a generalisation of the model introduced by Chase et al. in 
\cite[Section II.B, p. 72]{MR1201496}.            
It consists of three buffers (tanks) numbered 1, 2, 3, and a server. At each time $t \ge 0$, a fluid  is delivered to each tank $i$ at the constant rate $\rho_i=\frac13$ ($i=1,2,3$) and is removed from a selected 
tank $j$ in $\{1, 2, 3\}$   
by the server at the constant rate $\rho = 1.$ The volume of fluid in the tank $i$ at the time $t$ is denoted by $v_i(t)$. When the tank $i$ is emptied by the server at the time $t$, the server changes its location to the tank $j\neq i$ with the largest scaled volume $d_{ij} v_j(t)$, where $\{d_{ij}: 1\le i,j\le 3, i\neq j\}\subset \mathbb{R}_+^*$ are parameters of the system. It was shown in \cite{MR4120256} that the dynamics of the switched server system depends only on the proportions of pairs of parameters
\begin{equation}\label{d1d2d3}
 \frac{d_{13}}{d_{12}}=d_1, \quad \frac{d_{21}}{d_{23}}=d_2,\quad \frac{d_{32}}{d_{31}}=d_3.
 \end{equation}
We assume that $\sum_{i=1}^3 v_i(0)=1$ so that $\sum_{i=1}^3 v_i(t)=1$ for every $t\ge 0$. In this way, the state $\mathbf{v}(t)=(v_1(t),v_2(t),v_3(t))$ of the system at the time $t\ge 0$ is a  vector in the phase space 
$$    
\Delta=\{\mathbf{v}=(v_1,v_2,v_3): v_1, v_2, v_3 \ge 0 \,\textrm{and}\,\, v_1 +v_2+v_3=1 \}. 
$$
Let $l(t)$ denote the position of the server at the time $t$. We assume that 
the map $t\mapsto l(t)$ is right-continuous. 

It follows from \cite[Theorem 1.4]{MR3820005} and from \cite[Equation (1.2)]{MR4120256} that, for Lebesgue almost every vector $(d_1,d_2,d_3)$ with positive entries, any switched server system with parameters $d_{ij}$ satisfying \eqref{d1d2d3} is structurally stable and admits finitely many limit cycles that attract all the orbits. However, the set of generic parameters $(d_1,d_2,d_3)$ is not computable, i.e., it is not possible (or at least not easy) to know if a previously chosen positive vector $(d_1,d_2,d_2)$ lies in the generic set of parameters. In \cite[Theorem 2.2]{MR4120256} it was provided a non-generic positive vector $(d_1,d_2,d_2)$ such that any switched server system with parameters $d_{ij}$ satisfying \eqref{d1d2d3} admits a fractal global attractor.

\subsection{The interplay between switched server systems and interval dynamics}\label{int}

 The dynamics of the switched server system described in Section \ref{sss} with $\rho_1=\rho_2=\rho_3=\frac13$ is governed by an injective piecewise $-\frac12$-affine contraction. More precisely, the dynamics of the switched server system is completely determined by the Poincar\'e map $F:\partial\Delta\to\partial\Delta$ induced by the flow 
 $$       
 t\in [0,\infty)\mapsto \mathbf{v}(t)\in\Delta       
 $$
 on the boundary $\partial\Delta$ of the phase space. The Poincar\'e map $F$ is topologically conjugate to the  interval map $f:I\to I$ defined by $f=\phi^{-1}\circ F\circ\phi$, where $\phi:[0,1)\to\partial\Delta$ is the anticlockwise arc-length parametrisation of $\partial\Delta$ with
 $\phi(0)=\mathbf{e}_2=(0,1,0)$, $\phi(\frac13)=\mathbf{e}_3=(0,0,1)$ and $\phi(\frac23)=\mathbf{e}_1=(1,0,0)$. The map $f$, computed in \cite[Equation (1.2)]{MR4120256}, is given by
\begin{equation}\label{fd1d2d3}
f(x)=f_{d_1,d_2,d_3}(x)=
\begin{cases} -\dfrac12 x + \dfrac12 & \textrm{if} \,\, x\in [x_0,x_1)\\[0.5em]
-\dfrac12 x + 1 & \textrm{if} \,\, x\in [x_1,x_2)\\[0.5em]
-\dfrac12 x + \dfrac12 & \textrm{if} \,\, z\in [x_2,x_3) \\[0.5em]
-\dfrac12 x + 1 & \textrm{if} \,\, x\in [x_3,x_4), \\[0.5em]     
\end{cases}
\end{equation}
where
\begin{equation}\label{y123} 
x_0=0,\quad 
x_1=\dfrac{d_2}{3(d_2+d_3)},\quad x_2=\dfrac{d_3}{3(d_1+d_3)}+\dfrac{1}{3},\quad
x_3=\dfrac{d_1}{3(d_1+d_2)}+\dfrac{2}{3}, \quad x_4=1.
\end{equation}

Figure \ref{thesss} shows a graphical representation of the interplay between the switched server system, the Poincar\'e map $F:\partial\Delta\to\partial\Delta$ and the piecewise $-\frac12$-affine contraction $f:I\to I$ considering the parameter values $d_1=d_2=d_3=1$ and $d_{ij}=1$ for all $i\neq j$.

 \begin{figure}[!htb]\vspace{-0.6cm}
   \begin{minipage}{0.3\textwidth}
      \centering
       \vspace*{1cm}
     \begin{tikzpicture}[scale=1]
\begin{scope}[shift={(0,0)}, scale=0.5]
    \draw[very thick] (0,6)--(0,0.5)--(1.3,0.5)--(1.3,0)--(1.7,0)--(1.7,0.5)--(3,0.5)--(3,6);
    \begin{pgfonlayer}{background}
        \filldraw[blue!25] (0,3.5)--(3,3.5)--(3,0.5)--(1.7,0.5)--(1.7,0)--(1.3,0)--(1.3,0.5)--(0,0.5)--cycle;
     \end{pgfonlayer}
      \draw[very thick,->] (1.5,6.5)--(1.5,5.5)node[pos=0,anchor=south west]{$\rho_1=\frac13$};
        \node at (1.4,1){Tank 1};
\draw[very thick,->] (1.5,0)--(1.5,-2)--(5,-2);
    \draw[very thick] (4,6)--(4,0.5)--(5.3,0.5)--(5.3,0)--(5.7,0)--(5.7,0.5)--(6,0.5)--(7,0.5)--(7,6);
    \begin{pgfonlayer}{background}
    \filldraw[blue!25] (4,2.5)--(4,0.5)--(5.3,0.5)--(5.3,0)--(5.7,0)--(5.7,0.5)--(6,0.5)--(7,0.5)--(7,2.5)--cycle;
     \end{pgfonlayer}
      \draw[very thick,->] (5.5,6.5)--(5.5,5.5)node[pos=0,anchor=south west]{$\rho_2=\frac13$};
      \node at (5.5,1){Tank 2};
       \draw[very thick,dashed,->] (5.5,0)--(5.5,-0.5)--(5.5,-1.5)node[pos=0,anchor=south west]{};
    \draw[very thick] (8,6)--(8,0.5)--(9.3,0.5)--(9.3,0)--(9.7,0)--(9.7,0.5)--(11,0.5)--(11,6);
    \begin{pgfonlayer}{background}
    \filldraw[blue!25] (8,4.5)--(8,0.5)--(9.3,0.5)--(9.3,0)--(9.7,0)--(9.7,0.5)--(11,0.5)--(11,4.5)--cycle;
     \end{pgfonlayer}
      \draw[very thick,->] (9.5,6.5)--(9.5,5.5)node[pos=0,anchor=south west]{$\rho_3=\frac13$};
      \node at (9.5,1){Tank 3};
       \draw[very thick,dashed,->] (9.5,-0.2)--(9.5,-2)--(6,-2)node[pos=0,anchor=south west]{};
       \draw[very thick,->] (5.5,-2.5)--(5.5,-3.5) node[xshift=-1cm, yshift=0.45cm]{$\rho=1$};
      
      
     \node [draw,circle, minimum width=0.2 cm,very thick](B) at (5.5,-2){}; 
     \draw[very thick, ->](-2,-3.5)--(-2,7);
      \draw[very thick,dashed](-2.1,3.5)--(3,3.5) node[pos=0,left]{$v_1$};
      \draw[very thick,dashed](-2.1,2.5)--(7,2.5) node[pos=0,left]{$v_2$};
       \draw[very thick,dashed](-2.1,4.5)--(11,4.5) node[pos=0,left]{$v_3$};
        \draw[very thick,dashed](-2.1,0.5)--(11,0.5) node[pos=0,left]{$0$};

\end{scope}
\end{tikzpicture}
   \end{minipage}\hfill
   \begin {minipage}[m]{0.27\textwidth}
     \centering
      \vspace*{1.25cm}

\begin{tikzpicture}[tdplot_main_coords, scale=3.2]
\begin{scope}[shift={(0,1)}]
   \draw [,fill opacity=0.5]
          (1,0,0) -- (0,1,0) -- (0,0,1) -- cycle;
   \draw (1,0.2) node[left] {$\mathbf{e}_1$};
   \draw (0.1,1,-0.05) node[right] {$\mathbf{e}_2$};
   \draw (0.03,-0.1,1.03) node[right] {$\mathbf{e}_3$};
   \draw (0.03,-0.1,-0.75) node[right] {$F(\mathbf{e}_3)$};
 
\begin{scope}[blue, very thick,decoration={
    markings,
    mark=at position 0.65 with {\arrow[blue,scale=1.2]{latex}}}
    ] 
\draw[postaction={decorate}] ( 0, 0.666666666666667, 0.333333333333333 ) -- ( 0.333333333333333, 0, 0.666666666666667 ); 
\draw[postaction={decorate}] ( 0.333333333333333, 0, 0.666666666666667 ) -- ( 0.666666666666667, 0.333333333333333, 0 ); 
\draw[postaction={decorate}] ( 0.666666666666667, 0.333333333333333, 0 ) -- ( 0, 0.666666666666667, 0.333333333333333 ); 
 \end{scope}
\begin{scope}[blue, thin,decoration={
    markings,
    mark=at position 0.65 with {\arrow[blue,scale=1.2]{latex}}}
    ] 

\draw[postaction={decorate}] ( 0.333333333333333, 0, 0.666666666666667 ) -- ( 0.666666666666667, 0.333333333333333, 0 ); 
\draw[postaction={decorate}] ( 0.666666666666667, 0.333333333333333, 0 ) -- ( 0, 0.666666666666667, 0.333333333333333 ); 
\draw[postaction={decorate}] ( 0, 0.666666666666667, 0.333333333333333 ) -- ( 0.333333333333333, 0, 0.666666666666667 ); 
\end{scope}
\begin{scope}[red ,thick, decoration={
    markings,
    mark=at position 0.35 with {\arrow[red,scale=1.2]{latex}}}
    ] 

\draw[postaction={decorate}] ( 0, 0, 1 ) -- ( 0.5, 0.5, 0 ); 
\draw[postaction={decorate}] ( 0.5, 0.5, 0 ) -- ( 0, 0.75, 0.25 ); 
\draw[postaction={decorate}] ( 0, 0.75, 0.25 ) -- ( 0.375, 0, 0.625 ); 
\draw[postaction={decorate}] ( 0.375, 0, 0.625 ) -- ( 0.6875, 0.3125, 0 ); 
\end{scope}

    ] 

    
 \end{scope}  
 \end{tikzpicture}
   \end{minipage}
    \begin{minipage}{0.3\textwidth}
     \centering
     \vspace*{1cm}
\begin{tikzpicture}[scale=0.8]



\draw [  thick, ->] (0,0) -- (5.3,0) node [right] {\footnotesize $x$};
\draw [  thick, ->] (0,0) -- (0,5.3) node [above] {\footnotesize $f(x)$};
	
			

\draw (0,0)--(5,0)--(5,5)--(0,5);
			
\draw[fill=black] (0,5/2) circle (0.1);
\draw[fill=white] (5/6, 25/12) circle (0.1);
\draw[very thick] (0,5/2)--(5/6-0.08, 25/12+0.03) ;

\draw[fill=black] (5/6,55/12) circle (0.1);
\draw[fill=white] (5/2, 15/4) circle (0.1);
\draw[very thick] (5/6,55/12)--(5/2-0.1, 15/4+0.04) ;

\draw[fill=black] (5/2,5/4) circle (0.1);
\draw[fill=white] (25/6, 5/12) circle (0.1);
\draw[very thick] (5/2,5/4)--(25/6-0.1, 5/12+0.05) ;

\draw[fill=black] (25/6,35/12) circle (0.1);
\draw[fill=white] (5, 5/2) circle (0.1);
\draw[very thick] (25/6,35/12)--(5-0.1, 5/2+0.05) ;

\draw [  thick, dotted] (5/6,0)--(5/6,5);
\draw [  thick, dotted] (5/2,0)--(5/2,5);
\draw [  thick, dotted] (25/6,0)--(25/6,5);
\draw [  thick, dotted] (0,5/2)--(5,5/2);
\draw [  thick, dotted] (0,5/12)--(5,5/12);
\draw [  thick, dotted] (0,15/4)--(5,15/4);
\draw [  thick, dotted] (0,25/12)--(5,25/12);
\draw [  thick, dotted] (0,55/12)--(5,55/12);

\draw (0.6,0) node[below right] {$\frac16$};
\draw (5/2,0) node[below] {$\frac12$};
\draw (25/6,0) node[below] {$\frac56$};
 
\end{tikzpicture}
   \end{minipage}\hfill
   \caption{The switched server system, the Poincar\'e map $F:\partial\Delta\to\partial\Delta$ and the piecewise $\frac12$-affine contraction $f:I\to I$.}\label{thesss}
\end{figure}
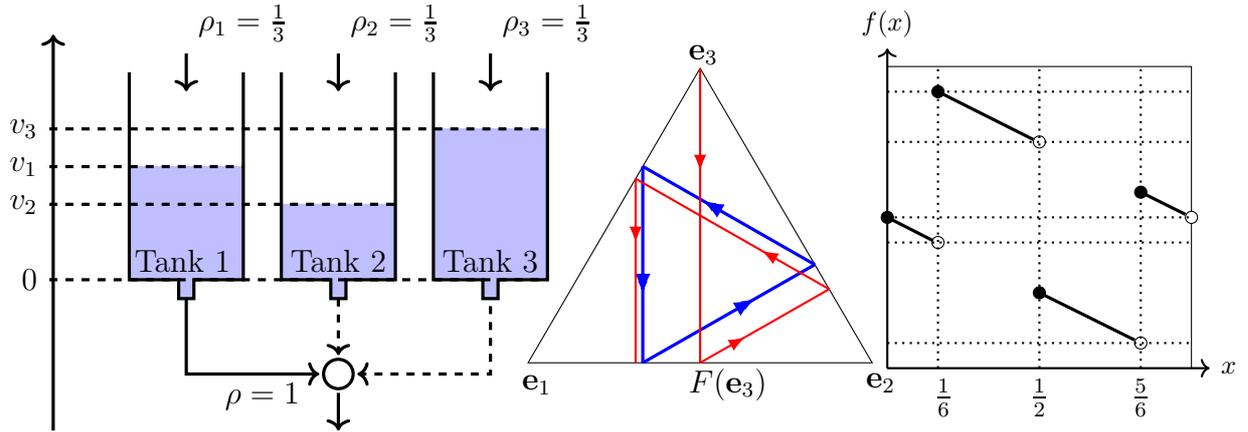

\section{Statement of the results}\label{section2}

We will need some terminology from Symbolic Dynamics and Number Theory. Let $\beta\ge 2$ be an integer and set $\mathcal{A}=\{0,1,\ldots,\beta-1\}$. Given an infinite word $w=w_0 w_1\ldots$ over the alphabet $\mathcal{A}$
and a positive integer $k$,   
we denote by $L_k(w)=\{w_{i} w_{i+1}\ldots w_{i+k-1}: i\ge 0\}$ the set of 
factors     
of $w$ of length $k$. The \textit{factor complexity}    
of $w$ is the map $p_{w}:\mathbb{N}\to\mathbb{N}$ defined by $p_w(k)=\#L_k(w)$, where $\#$ stands for cardinality. To each infinite word
$w=w_0 w_1\ldots$ over the alphabet $\mathcal{A}$, we can
associate  the real number $x$ in $[0, 1]$ whose $\beta$-ary expansion is  $w$.
More precisely, $x=\varphi(w)$,  where  $\varphi: \mathcal{A}^{\mathbb{N}}\to [0,1]$ is the function 
defined by
$$ \varphi(w) =\sum_{i=0}^\infty w_i \beta^{-(i+1)}=\beta^{-1}\sum_{i=0}^\infty w_i \beta^{-i}.$$

We say that $x=\varphi(w)$ is a \textit{rich number to base $\beta$} or a \textit{disjunctive number to base $\beta$}  if $p_w(k)=\beta^k$ for all $k\ge 1$, i.e., if all finite words over $\mathcal{A}$ occur in the $\beta$-ary expansion of $x$ (see \cite[\S 4.4, p. 91]{MR2953186}). Concerning numbers in the interval $[0,1]$, the set $\mathcal{R}_{\beta}$ made up of rich numbers to base $\beta$ contains the proper subset $\mathcal{N}_{\beta}$ of normal numbers in  base $\beta$. In particular,
 $\mathcal{R}_{\beta}$ is a residual $G_{\delta}$-set of Lebesgue measure $1$  (see \cite[Lemma 19]{MR1895809}).  The property of being a rich number depends on the base considered: see for instance the examples constructed in \cite{MR1639533}. Furthermore, there exist numbers that are rich in every base, see the explicit examples provided in \cite{MR2990128}. Anyway, it is much easier to construct a rich number to base $\beta$ than a normal number in the same base.  We will also consider a third class of numbers of $[0,1]$ defined as follows. Given integers $\beta, n\ge 2$, 
 set 
 $$
 \ell = \min \{k\in\mathbb{N}:  2\beta^{-k} < (1-\beta^{-1})/(n+1) \}
 = 1 + \left\lceil {\frac{\log \big(2 (n+1) / (\beta-1)\big)}{\log \beta}} \right\rceil,     
 $$ 
 where $\lceil x \rceil$ denotes the smallest integer strictly greater than $x$,    
 and let $S_{\beta,n}\subset [0,1]$ be the set defined by  
\begin{equation}\label{sbn}
S_{\beta,n}= \{\varphi(w): w\in\mathcal{A}^{\mathbb{N}}\quad \textrm{and}\quad p_w(\ell)=\beta^{\ell}\}.   
\end{equation}
The following nested sequence of inclusions holds for each integer $n \ge 2$:   
$$
 \mathcal{N}_{\beta}\subsetneq \mathcal{R}_{\beta}\subsetneq S_{\beta,n}.
 $$
 Moreover, $S_{\beta,n}$ is a neighbourhood of $\mathcal{R}_{\beta}$ and $I{\setminus} S_{\beta,n}$  
 has Hausdorff dimension less\linebreak than $1$. 
  
Now we state our results concerning the topological dynamics piecewise $\lambda$-affine maps. We denote by $\mathbb{Q}$ the set of rational numbers in $I$.  

\begin{theorem}\label{thm1} Let $\beta,n\ge 2$ be integers, $x_0=0$, $x_n=1$, $0<x_1<\cdots<x_{n-1}<1$ \linebreak be elements of the set $\mathbb{Q}\cup S_{\beta,n}$ defined by $(\ref{sbn})$ and $\alpha_1$, $\alpha_2$, \ldots, $\alpha_{n}$ be elements of the set $\{1,2,\ldots,\beta\}$. Then
 the global attractor $\omega(f)$ of the $n$-interval piecewise $\frac{1}{\beta}$-affine contraction 
$f: I\to I$ defined by $$f(x)=\frac1\beta x + \frac{\alpha_i-1}{\beta}\quad\textrm{for all}\quad x\in [x_{i-1},x_i)\quad\textrm{and}\quad 1\le i\le n,$$ is finite.
\end{theorem}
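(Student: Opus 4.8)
The plan is to exploit the fact that each branch of $f$ has slope $1/\beta$ and intercept of the very special form $(\alpha_i-1)/\beta$ with $\alpha_i\in\{1,\dots,\beta\}$, so that $f$ is intimately related to the base-$\beta$ shift. Concretely, I would first observe that if $x$ has base-$\beta$ expansion $x=0.a_1a_2a_3\ldots$, then $\frac1\beta x+\frac{\alpha_i-1}{\beta}$ has expansion $0.(\alpha_i-1)a_1a_2a_3\ldots$; in other words, applying the branch indexed by $i$ to $x$ prepends the digit $\alpha_i-1$ and shifts everything right. Thus $f$ acts on base-$\beta$ expansions by a substitution-like rule: $f(x)$ is obtained from $x$ by deleting nothing but inserting a new leading digit determined by which interval $[x_{i-1},x_i)$ contains $x$. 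Iterating, $f^k(x)$ has an expansion whose first $k$ digits record the itinerary of $x$ under $f$ (read in reverse), and whose tail is a $\beta^{-k}$-scaled copy of the original $x$. The upshot is that the long-term behaviour of $f$ is governed entirely by the itineraries, i.e.\ by the sequence of interval-indices visited, and the metric contraction by $1/\beta$ makes the tail irrelevant in the limit.

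The second, and central, step is to show that the $\omega$-limit set is finite by ruling out the "Cantor" alternative. The mechanism is a pigeonhole/combinatorial argument on finite words. Fix $x\in I$ and consider the points $f^k(x)$ for $k\ge 0$; by the description above, $f^{k+\ell}(x)$ and $f^{k}(x)$ differ, up to an error of size $O(\beta^{-k})$, by a rational number whose numerator is controlled by a length-$\ell$ block of the itinerary. The choice of $\ell$ in the excerpt—chosen precisely so that $2\beta^{-\ell}<(1-\beta^{-1})/(n+1)$—guarantees that two points of $I$ whose base-$\beta$ expansions agree on a sufficiently long prefix must lie in the same partition interval $[x_{i-1},x_i)$, \emph{provided} the relevant $x_i$ are not "fooled" by the prefix. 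This is exactly where the hypothesis $x_i\in\mathbb{Q}\cup S_{\beta,n}$ enters: either $x_i$ is rational (so its expansion is eventually periodic and we can reason about digit-blocks exactly), or $x_i\in S_{\beta,n}$, meaning every length-$\ell$ word appears in its expansion, which by the definition of $\ell$ forces the separation between consecutive discontinuity points to be realised digit-combinatorially in a uniform way. Combining these, I would argue that the itinerary of any point is eventually periodic: the discontinuities cannot "split" orbits indefinitely, so after finitely many steps every orbit falls into one of finitely many periodic itineraries, hence $\omega(x,f)$ is a finite union of periodic orbits, and since there are only finitely many admissible periodic itineraries, $\omega(f)=\bigcup_{x}\omega(x,f)$ is finite.

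In more detail, the key steps in order are: (1) establish the digit-prepending description of each branch and deduce the formula $f^k(x)=\beta^{-k}x + r_k$ where $r_k\in[0,1)$ depends only on the itinerary $(i_0,\dots,i_{k-1})$; (2) show that distinct admissible itineraries that agree on a long enough prefix produce points $r_k$ that are close, and that, conversely, if a point's orbit has a prefix-stable itinerary then the orbit converges geometrically to a periodic point; (3) use the definition of $\ell$ together with $x_j\in\mathbb{Q}\cup S_{\beta,n}$ to prove a \emph{separation lemma}: there is $\delta>0$ such that for every $j$, no point of the form $r_k$ (a finite-itinerary rational) lands within $\delta$ of $x_j$ unless this is forced, i.e.\ the itinerary map is "eventually locally constant"; (4) conclude that the set of itineraries actually realised by points of $I$ is finite, and is a finite set of eventually periodic sequences; (5) translate finiteness of itineraries back to finiteness of $\omega(f)$ via the geometric convergence from step (2).

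The main obstacle I anticipate is step (3), the separation lemma, specifically handling the discontinuity points $x_j$ that are \emph{irrational} elements of $S_{\beta,n}$. For rational $x_j$ the eventual periodicity of the expansion gives clean control, but for $x_j\in S_{\beta,n}\setminus\mathbb{Q}$ one must use the hypothesis $p_{w}(\ell)=\beta^{\ell}$ in a genuinely quantitative way—arguing that because \emph{every} block of length $\ell$ occurs in the expansion of $x_j$, any attempt to pin an orbit point asymptotically onto $x_j$ would force the orbit's itinerary to encode all length-$\ell$ words, which is incompatible with the finite alphabet of branches producing a consistent itinerary. Making this incompatibility precise, and checking that the constant $\ell$ as defined is exactly large enough (the factor $n+1$ in the denominator is there to absorb the $n-1$ interior discontinuities plus endpoints), is where the real work lies. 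Everything else is routine: the digit bookkeeping in step (1), the geometric-series estimates in step (2), and the final bookkeeping in steps (4)--(5).
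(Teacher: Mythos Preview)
Your opening observation is exactly right: each branch of $f$ prepends the digit $\alpha_i-1$ to the base-$\beta$ expansion, so $f^k(x)=\beta^{-k}x+r_k$ with $r_k$ determined by the itinerary. But from that point on you are iterating in the wrong direction, and your step~(3) does not go through as stated.

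The paper's argument runs \emph{backwards}. Because $f$ prepends a digit, the inverse $f^{-1}$ (where defined) \emph{deletes} the leading digit; in other words, on $f(I)$ the map $f^{-1}$ coincides with the $\beta$-transformation $T_\beta(x)=\{\beta x\}$. Consequently, if the full backward orbit $\{f^{-k}(x_j)\}_{k\ge0}$ of a discontinuity $x_j$ exists, it equals $\{T_\beta^k(x_j)\}_{k\ge0}$ and must remain in $f(I)$. Now comes the point you missed: since $f$ is injective with slope $1/\beta$ on $n$ pieces, the image $f(I)$ has Lebesgue measure $1/\beta$, so $I\setminus f(I)$ has measure $1-1/\beta$ and, being bounded by the $n$ image intervals, has at most $n+1$ components. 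Hence $I\setminus f(I)$ contains an open gap $J$ of length at least $(1-\beta^{-1})/(n+1)$. The integer $\ell$ is chosen so that some dyadic interval $[p\beta^{-\ell},(p+1)\beta^{-\ell}]$ sits inside $J$. If $x_j\in S_{\beta,n}$, every length-$\ell$ word occurs in its expansion, so the $T_\beta$-orbit of $x_j$ visits \emph{every} such dyadic interval and in particular enters $J\subset I\setminus f(I)$, a contradiction. Thus the backward orbit of each $x_j$ is finite (for $x_j\in\mathbb{Q}$ this is immediate since the $T_\beta$-orbit is eventually periodic). The finite set $H=\bigcup_{j,k} f^{-k}(\{x_j\})$ then cuts $I$ into a finite $f$-invariant quasi-partition, and a short contraction argument (Lemma~\ref{lem2.4}) gives finiteness of $\omega(f)$.

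Your forward ``separation lemma'' does not capture this. When you say that pinning $f^k(x)$ onto $x_j$ would force the itinerary to encode all length-$\ell$ words, note that closeness of $f^k(x)$ to $x_j$ only matches the \emph{first} $\ell$ digits of $x_j$ against the most recent $\ell$ itinerary symbols; it says nothing about the deeper digits of $x_j$, which is precisely where the richness hypothesis $p_w(\ell)=\beta^\ell$ lives. The hypothesis becomes usable only after you shift $x_j$, i.e.\ apply $T_\beta$, which is why the backward viewpoint is the natural one. Also, your reading of the constant $(n+1)$ as ``$n-1$ interior discontinuities plus endpoints'' is off: it counts the maximal number of complementary gaps of the $n$ image intervals $f([x_{i-1},x_i))$, not the partition of the domain.
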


Given integers $\beta, n\ge 2$,  set
 $$
 \ell' = \min \{k\in\mathbb{N}:  2\beta^{-2k} < (1-\beta^{-1})/(n+1) \}
 =   \left\lceil {\frac12+ \frac{\log \big( 2(n+1)  / (\beta-1)\big)}{2 \log \beta}} \right\rceil       
 $$ 
 and let $S'_{\beta,n}\subset [0,1]$ be the set defined by  
\begin{equation}\label{sbnp}
S'_{\beta,n}= \left\{\varphi(w): w\in\{0,1,\ldots,\beta^2-1\}^{\mathbb{N}}\quad \textrm{and}\quad p_w(\ell')=\beta^{2\ell'}\right\}.   
\end{equation}


The following nested sequence  of inclusions holds for each integer $n\ge 2:$
$$ \mathcal{N}_{\beta^2}\subset \mathcal{R}_{\beta^2}\subset \mathcal{S}_{\beta,n}'. 
$$

\begin{theorem}\label{thm2} Let $\beta,n\ge  2$ be integers, $x_0=0$, $x_n=1$, $0<x_1<\cdots<x_{n-1}<1$ be 
elements of the set {$\mathbb{Q}\cup S'_{\beta,n}$} defined by $(\ref{sbnp})$  
and $\alpha_1$, $\alpha_2$, \ldots, $\alpha_{n}$ be elements of the set $\{1,2,\ldots,\beta\}$ with $\alpha_1\neq \beta$. Then
 the global attractor $\omega(f)$ of the $n$-interval piecewise $-\frac{1}{\beta}$-affine contraction 
$f: I\to I$ defined by $$f(x)=-\frac1\beta x + \frac{\alpha_i}{\beta}\quad\textrm{for all}\quad x\in [x_{i-1},x_i)\quad\textrm{and}\quad 1\le i\le n,$$ is finite.
\end{theorem}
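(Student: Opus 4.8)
The plan is to reduce Theorem~\ref{thm2} to Theorem~\ref{thm1} by squaring the map. The key observation is that if $f(x) = -\frac1\beta x + \frac{\alpha_i}{\beta}$ on $[x_{i-1},x_i)$, then $f^2$ is locally of the form $x \mapsto \frac{1}{\beta^2} x + c$, i.e.\ $f^2$ is a \emph{positive} piecewise $\frac{1}{\beta^2}$-affine contraction. Since $\omega(f)$ and $\omega(f^2)$ are finite or infinite simultaneously (indeed $\omega(f) = \omega(f^2) \cup f(\omega(f^2))$ up to the usual care with the finitely many discontinuity points), it suffices to show that $f^2$ satisfies the hypotheses of Theorem~\ref{thm1} with base $\beta^2$. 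First I would compute the partition on which $f^2$ is affine: it is a refinement of $\{[x_{i-1},x_i)\}$ by the pullbacks $f^{-1}(x_j)$, so its breakpoints are the $x_i$ together with points of the form $\alpha_j - \beta x_i$ (the solutions of $f(y) = x_i$), all of which lie in $\mathbb{Q}$ if the $x_i$ do, and otherwise are affine-over-$\mathbb{Z}$ images of the $x_i$. Then I would check that the new additive constants of $f^2$ have the shape $\frac{m-1}{\beta^2}$ with $m \in \{1,\ldots,\beta^2\}$: from $f^2(x) = \frac{1}{\beta^2}x - \frac{\alpha_i}{\beta^2} + \frac{\alpha_j}{\beta}$ one reads off the constant $\frac{\beta\alpha_j - \alpha_i}{\beta^2}$, and the hypothesis $\alpha_1 \neq \beta$ (together with $1 \le \alpha_i,\alpha_j \le \beta$) is exactly what is needed to keep this numerator in the admissible range $\{1,\ldots,\beta^2\}$ on every piece.

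The main obstacle, and the reason the set $S'_{\beta,n}$ is defined with the peculiar exponent $2\ell'$, is the breakpoint condition: to apply Theorem~\ref{thm1} to $f^2$ in base $\beta^2$, the breakpoints of $f^2$ must lie in $\mathbb{Q} \cup S_{\beta^2, n'}$ where $n'$ is the (at most $2n$) number of pieces of $f^2$. A breakpoint $x_i$ that is a rich (or $S_{\beta,n}$-type) number in base $\beta$ is automatically rich in base $\beta^2$ after grouping digits in pairs, and the condition $p_w(\ell') = \beta^{2\ell'}$ in \eqref{sbnp} is precisely the statement that the base-$\beta^2$ expansion (digits in $\{0,\ldots,\beta^2-1\}$) has full complexity at the length $\ell'$ that corresponds, in base $\beta$, to the length $\ell$ needed for $n' \le 2n$ pieces in Theorem~\ref{thm1}; a short computation with the defining formulas for $\ell$ and $\ell'$ confirms $\ell(\beta^2, 2n) \le \ell'$. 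The pullback breakpoints $\alpha_j - \beta x_i$ require separate attention: they are obtained from $x_i$ by $y \mapsto \alpha_j - \beta y$, an affine map with integer coefficients, and one must argue that this map sends $\mathbb{Q} \cup S'_{\beta,n}$ into $\mathbb{Q} \cup S_{\beta^2,n'}$ — which follows because multiplication by $\beta$ in base $\beta$ (hence the shift on base-$\beta^2$ digit pairs up to a bounded-length prefix change) and subtraction from an integer preserve the richness of the tail, and tail-richness is all that matters for $S$-membership.

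Concretely the steps are: (1) write down $f^2$ explicitly on its refined partition and identify its slope $\frac{1}{\beta^2}$, its breakpoints, and its additive constants; (2) verify using $\alpha_1 \neq \beta$ that each constant equals $\frac{m-1}{\beta^2}$ for some $m \in \{1,\ldots,\beta^2\}$, so $f^2$ has the exact normal form required by Theorem~\ref{thm1} with base $\beta^2$; (3) verify that every breakpoint of $f^2$ lies in $\mathbb{Q} \cup S_{\beta^2,n'}$, handling the original $x_i$ via the digit-pairing inclusion $\mathcal{R}_{\beta^2} \subset S_{\beta^2,n'}$ and the comparison $\ell(\beta^2,n') \le \ell'$, and handling the new breakpoints $\alpha_j - \beta x_i$ via the stability of $S$-membership under integer-affine maps; (4) apply Theorem~\ref{thm1} to conclude $\omega(f^2)$ is finite; (5) deduce $\omega(f)$ is finite from $\omega(f) \subseteq \omega(f^2) \cup f\bigl(\omega(f^2)\bigr) \cup \{$finitely many discontinuity points and their forward images$\}$, observing that a piecewise contraction with a finite $\omega$-limit set for $f^2$ cannot acquire an infinite one for $f$. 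I expect step (3), and in particular the bookkeeping that $I \setminus S'_{\beta,n}$ is small and the exact matching of the complexity lengths, to be the delicate part; steps (1), (2), (4) are essentially routine once the normal form is pinned down.
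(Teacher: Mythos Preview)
Your reduction to Theorem~\ref{thm1} via squaring has a genuine gap at step~(3), for the \emph{new} breakpoints of $f^2$. These breakpoints are the preimages $f^{-1}(x_j)$; when $x_j$ is irrational and such a preimage exists it equals $T_{-\beta}(x_j)=\{-\beta x_j\}$. You assert that the map $y\mapsto \alpha_i-\beta y$ acts on base-$\beta^2$ expansions as ``the shift on base-$\beta^2$ digit pairs up to a bounded-length prefix change''. This is false: multiplication by $\beta$ is a one-place shift in base~$\beta$, which in base~$\beta^2$ is a \emph{half}-shift and not a finite-prefix perturbation of the digit sequence. Concretely, if the base-$\beta$ expansion of $x_j$ is $a_0a_1a_2\ldots$ so that its base-$\beta^2$ digits are $d_k=\beta a_{2k}+a_{2k+1}$, then $\{\beta x_j\}$ has base-$\beta^2$ digits $e_k=\beta a_{2k+1}+a_{2k+2}$. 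The hypothesis $x_j\in S'_{\beta,n}$, i.e.\ $p_{(d_k)}(\ell')=\beta^{2\ell'}$, says that every word of length $2\ell'$ over $\{0,\ldots,\beta-1\}$ occurs in $(a_m)$ at some \emph{even} position; the analogous condition on $(e_k)$ asks for occurrences at \emph{odd} positions. These are independent, and it is easy to build $(a_m)$ satisfying the first but not the second, so $T_{-\beta}(x_j)$ need not lie in any $S_{\beta^2,n'}$. Hence you cannot invoke Theorem~\ref{thm1} for $f^2$ as a black box. (There is also a minor slip in step~(2): the constants $\tfrac{\beta\alpha_j-\alpha_i}{\beta^2}$ are always in the admissible range, since $0\le\beta\alpha_j-\alpha_i\le\beta^2-1$ for all $\alpha_i,\alpha_j\in\{1,\ldots,\beta\}$; the hypothesis $\alpha_1\neq\beta$ is needed only so that $f(0)=\alpha_1/\beta<1$, i.e.\ so that $f$ is a self-map of $[0,1)$.)

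The paper's argument sidesteps this problem by never introducing new breakpoints. It reruns the proof of Theorem~\ref{thm1} for $f$ itself: one shows that if the backward $f$-orbit of an original breakpoint $x_i$ is infinite then $f^{-(k+1)}(x_i)=T_{-\beta}\bigl(f^{-k}(x_i)\bigr)$ for all $k$, so the even-indexed subsequence is $\{T_{\beta^2}^k(x_i)\}_{k\ge0}$ by the identity $(T_{-\beta})^2=T_{\beta^2}$. The hypothesis $x_i\in S'_{\beta,n}$ then forces this $T_{\beta^2}$-orbit to visit a $\beta^{-2\ell'}$-interval contained in $I\setminus f(I)$, contradicting the assumption. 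Thus Claim~D and Lemma~\ref{lem2.4} apply directly to $f$, and the $S'$ hypothesis is only ever invoked for the given points $x_1,\ldots,x_{n-1}$. If you want to salvage your approach, you would have to open up the proof of Theorem~\ref{thm1} anyway and observe that the backward $f^2$-orbit of a new breakpoint $f^{-1}(x_j)$ is contained in the backward $f$-orbit of $x_j$ --- at which point you have essentially reproduced the paper's argument.
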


 Notice that taking $\beta=2, n=4$, $\alpha_1=1$, $\alpha_2=2$, $\alpha_3=1$, $\alpha_4=2$ in Theorem \ref{thm2} leads to the map $f_{d_1,d_2,d_3}$ given by \eqref{fd1d2d3}. Moreover, the result in Theorem \ref{thm2} is optimal because in \cite[Definition 6.2, Proof of Proposition 6.4 and Figure 4]{MR4120256} it was constructed an example with $\beta=2$, $\alpha_1=1$, $\alpha_2=2$, $\alpha_3=1$, $\alpha_4=2$ such that the interval map in Theorem \ref{thm2} has a Cantor attractor. In this case, it follows from Theorem \ref{thm2} that
at least one of the numbers $x_i$ (denoted by $z_i$ in \cite{MR4120256}), $1\le i\le 3$, is {an irrational number that is not rich}  to base $4$ (in particular, such $x_i$ is not a normal number in base $4$). More pecisely, since the integer $\ell'$ corresponding to these parameters is equal to $3$, there exists  
at least one word of length $3$ over $\{0, 1, 2, 3\}$ which does not occur in the $4$-ary    
expansion of (at least) one of the $x_i$'s.  


Now we state a consequence of Theorem \ref{thm2} in the dynamics of switched server systems. 

\begin{corollary}\label{cor1} 
Let $x_1$ in $(0,\frac13)$, $x_2$ in $(\frac13,\frac23)$ and $x_3$ in $(\frac23,1)$   
{be  rational numbers or irrational numbers whose $4$-ary expansions contain all words of length $3$ over $\{0,1,2,3\}$}
 and let $(d_1,d_2,d_3)$ be the vector with positive entries satisfying
$$ d_1=\frac{1}{3x_1}-1, \quad d_2=\frac{2-3x_2}{3x_2-1},\quad d_3=\frac{3-3x_3}{3x_3-2}.
$$
Then  any switched server system with parameters $d_{ij}$ satisfying \eqref{d1d2d3} has no fractal attractor or, more precisely, the global attractor $\omega(F)$ of its Poincar\'e map $F:\partial\Delta\to\partial\Delta$ is finite.
\end{corollary}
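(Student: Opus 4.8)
The plan is to reduce the statement directly to Theorem \ref{thm2} applied with $\beta = 2$ and $n = 4$, so that essentially all the work consists of checking that the hypotheses on the points $x_1, x_2, x_3$ translate into the hypothesis ``$x_0=0, x_1, x_2, x_3, x_4=1$ lie in $\mathbb{Q} \cup S'_{2,4}$'' of that theorem. First I would recall from Section \ref{int} that the Poincar\'e map $F$ is topologically conjugate (via the arc-length parametrisation $\phi$) to the interval map $f = f_{d_1,d_2,d_3}$ of \eqref{fd1d2d3}, and that topological conjugacy carries finite global attractors to finite global attractors; hence it suffices to prove that $\omega(f)$ is finite. Next I would observe that \eqref{fd1d2d3} is exactly the map appearing in Theorem \ref{thm2} with $\beta = 2$, $n = 4$, and branch constants $\alpha_1 = 1, \alpha_2 = 2, \alpha_3 = 1, \alpha_4 = 2$ (in particular $\alpha_1 = 1 \neq 2 = \beta$, so the extra hypothesis of Theorem \ref{thm2} is met), provided that the breakpoints $x_1, x_2, x_3$ of $f$ coincide with the values given by \eqref{y123} in terms of $d_1, d_2, d_3$.

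The one genuine computation is therefore to verify that inverting the three displayed relations
$$d_1 = \frac{1}{3x_1} - 1, \qquad d_2 = \frac{2 - 3x_2}{3x_2 - 1}, \qquad d_3 = \frac{3 - 3x_3}{3x_3 - 2}$$
recovers exactly \eqref{y123}; that is, that $x_1 = \tfrac{d_2}{3(d_2+d_3)}$, $x_2 = \tfrac{d_3}{3(d_1+d_3)} + \tfrac13$, $x_3 = \tfrac{d_1}{3(d_1+d_2)} + \tfrac23$. Here I would note that the first relation already gives $x_1 = \tfrac{1}{3(d_1+1)}$, so I should also check that the two parametrisations of $x_1$ agree, which amounts to the identity $\tfrac{1}{d_1+1} = \tfrac{d_2}{d_2+d_3}$; since \eqref{d1d2d3} only constrains the ratios $d_1, d_2, d_3$ and the system is invariant under that reduction, this is a consistency statement that one reads off from \cite[Equation (1.2)]{MR4120256}. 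Along the way I would also confirm that, as $x_1$ ranges over $(0,\tfrac13)$, $x_2$ over $(\tfrac13,\tfrac23)$, $x_3$ over $(\tfrac23,1)$, the resulting $d_1, d_2, d_3$ are indeed positive and, conversely, every positive triple arises, so the correspondence $x_i \leftrightarrow d_i$ is a bijection matching the ordering $0 < x_1 < x_2 < x_3 < 1$ required by Theorem \ref{thm2}.

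It remains to match the number-theoretic hypotheses. The integer $\ell'$ attached to $\beta = 2$, $n = 4$ is $\ell' = \lceil \tfrac12 + \tfrac{\log(2\cdot 5/1)}{2\log 2}\rceil = \lceil \tfrac12 + \tfrac{\log 10}{2\log 2}\rceil = 3$, so membership in $S'_{2,4}$ means precisely that the base-$4$ expansion of the number contains every word of length $3$ over $\{0,1,2,3\}$ (using that base-$2$ digit pairs are base-$4$ digits, as in \eqref{sbnp}). This is exactly the hypothesis placed on $x_1, x_2, x_3$ in the statement; rational numbers are allowed by the $\mathbb{Q}$ summand; and the endpoints $x_0 = 0$ and $x_4 = 1$ are rational. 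Hence $x_0, x_1, x_2, x_3, x_4 \in \mathbb{Q} \cup S'_{2,4}$, Theorem \ref{thm2} applies, $\omega(f)$ is finite, and therefore so is $\omega(F)$, which is the assertion. I do not expect a real obstacle here: the only thing to be careful about is the bookkeeping in the previous paragraph — checking positivity of the $d_i$ on the prescribed intervals and that the inversion of the rational functions reproduces \eqref{y123} verbatim rather than up to some rearrangement — but this is routine algebra with no hidden difficulty.
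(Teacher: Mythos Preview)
Your approach is exactly the paper's: invoke the topological conjugacy from Section~\ref{int} between $F$ and $f_{d_1,d_2,d_3}$, observe that \eqref{fd1d2d3} is the map of Theorem~\ref{thm2} with $\beta=2$, $n=4$, $(\alpha_1,\alpha_2,\alpha_3,\alpha_4)=(1,2,1,2)$ (so $\alpha_1\neq\beta$), compute $\ell'=3$ so that the number-theoretic hypothesis in the corollary coincides with membership in $S'_{2,4}$, and apply Theorem~\ref{thm2}. The paper's proof is two sentences long and does precisely this, without writing out $\ell'$ or the $\alpha_i$ explicitly.

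There is, however, one place where you are more careful than the paper and where your instinct is correct but your dismissal is not. You ask whether inverting the displayed relations $d_i=d_i(x_i)$ reproduces \eqref{y123}, and you propose to read the required identity $\tfrac{1}{d_1+1}=\tfrac{d_2}{d_2+d_3}$ off the cited reference as a ``consistency statement''. That identity is \emph{not} a general fact: it is equivalent to $d_3=d_1d_2$, and the analogous identities for $x_2,x_3$ force $d_1=d_2d_3$ and $d_2=d_1d_3$, which together imply $d_1=d_2=d_3=1$. More structurally, each formula in \eqref{y123} depends only on a ratio of two of the $d_j$, so the map $(d_1,d_2,d_3)\mapsto(x_1,x_2,x_3)$ in \eqref{y123} has a two-dimensional image inside $(0,\tfrac13)\times(\tfrac13,\tfrac23)\times(\tfrac23,1)$, whereas the corollary's formulas set up a bijection of that full three-dimensional box with $(0,\infty)^3$. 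The two parametrisations therefore cannot agree globally, and this is not ``routine algebra with no hidden difficulty''. The paper's own proof simply asserts the conjugacy and applies Theorem~\ref{thm2} without addressing this, so what you have uncovered is an apparent inconsistency between the displayed formulas in the corollary and \eqref{y123} rather than a defect in your strategy. Under the evidently intended reading --- that $x_1,x_2,x_3$ are the breakpoints of $f$ --- your argument and the paper's go through verbatim.
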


A more concrete example can be constructed by considering the  base-$4$ Champernowne constant 
$$c=\varphi(\zeta),\quad \textrm{where}\quad \zeta=1\,2\, 3\, 10\, 11\,12\, 13\, 20\, 21\, 22\, 23 \, 30 \, 31 \, 32 \, 33 \, 100\ldots$$ is the infinite word over $\{0,1,2,3\}$  obtained by concatenation of the finite  $4$-ary expansions of the positive integers. Clearly, by construction, $c$ is rich to base $4$.  Notice that $c$ is the real number whose decimal expansion is 
$$0.42611111111111106576455657142016198509554623896723\ldots$$

\begin{corollary}\label{cor2} Let $x_1=c-\frac{1}{4}$, $x_2=c$ and $x_3=c+\frac{1}{2}$,  
where $c$ is the base-$4$ Champernowne constant. Let $(d_1,d_2,d_3)$ be as in Corollary \ref{cor1}.
Then  any switched server system with parameters $d_{ij}$ satisfying \eqref{d1d2d3} has no fractal attractor or, more precisely, the global attractor $\omega(F)$ of its Poincar\'e map $F:\partial\Delta\to\partial\Delta$ is finite.
\end{corollary}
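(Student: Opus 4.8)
The plan is to derive Corollary \ref{cor2} as a direct instance of Corollary \ref{cor1}, so the whole argument reduces to checking that the chosen data satisfy the hypotheses of that corollary. First I would verify the location of the three points: since the base-$4$ Champernowne constant has decimal expansion beginning $0.4261\ldots$, we have $c\in(\tfrac13,\tfrac23)$, hence $x_2=c\in(\tfrac13,\tfrac23)$ automatically; moreover $x_1=c-\tfrac14\approx 0.176\ldots$ lies in $(0,\tfrac13)$ and $x_3=c+\tfrac12\approx 0.926\ldots$ lies in $(\tfrac23,1)$. (These three inequalities are the only numerical checks required, and they follow from the two-sided estimate $\tfrac14\le c\le \tfrac12$, which one can read off from the first digits of $\zeta$, namely $\zeta=1\,2\,3\,10\ldots$, giving $c=\varphi(\zeta)\in[\,0.1_4,\,0.2_4\,)$... wait, more carefully, $\varphi(1\,2\,3\ldots)=\tfrac14\cdot\varphi(1\,2\,3\ldots)$; I would just quote the stated decimal expansion.)

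Next I would argue that each $x_i$ has the required richness property, i.e. that its $4$-ary expansion contains every word of length $3$ over $\{0,1,2,3\}$. For $x_2=c$ this is exactly the statement ``$c$ is rich to base $4$'', which is immediate from the construction of $\zeta$ as the concatenation of all finite $4$-ary strings (every finite word over $\{0,1,2,3\}$, in particular every word of length $3$, occurs as the $4$-ary expansion of infinitely many integers and hence appears in $\zeta$). For $x_1=c-\tfrac14$ and $x_3=c+\tfrac12$ the key observation is that subtracting $\tfrac14$ or adding $\tfrac12$ only changes finitely many of the leading $4$-ary digits: in base $4$, $\tfrac14=0.1000\ldots_4$ and $\tfrac12=0.2000\ldots_4$, so adding or subtracting these affects only the first digit (possibly with a single carry that, because $c$ is irrational and not eventually constant, propagates only a bounded number of places). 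Consequently $x_1$ and $x_3$ have the same $4$-ary expansion as $c$ from some index on, and therefore they too contain every length-$3$ word. In particular each of $x_1,x_2,x_3$ belongs to the hypothesis class of Corollary \ref{cor1}, and each is irrational.

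With these verifications in place, Corollary \ref{cor1} applies verbatim to the vector $(d_1,d_2,d_3)$ defined by $d_1=\tfrac{1}{3x_1}-1$, $d_2=\tfrac{2-3x_2}{3x_2-1}$, $d_3=\tfrac{3-3x_3}{3x_3-2}$: the ranges of the $x_i$ guarantee the denominators are positive and the $d_i$ are well-defined positive numbers, and the conclusion is that any switched server system with parameters $d_{ij}$ satisfying \eqref{d1d2d3} has a finite global attractor $\omega(F)$ for its Poincar\'e map, hence no fractal attractor. I would write this last step as a one-line deduction.

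The only genuinely delicate point is the carry analysis in the previous paragraph: one must be sure that adding $\tfrac14$ (resp. $\tfrac12$) to (resp. subtracting from) $c$ does not destroy richness. The clean way to see this is to note that richness of a number is a tail property of its $4$-ary expansion — it depends only on which finite words occur, and that in turn depends only on the expansion from any fixed position onwards — together with the fact that $c\pm r$ for $r\in\{\tfrac14,\tfrac12\}$ differs from $c$ by a dyadic-type rational whose $4$-ary expansion terminates, so the two numbers agree in all but finitely many $4$-ary digits. Since $c$ is irrational, its expansion is not eventually periodic, but this subtlety is irrelevant: we only need that the two expansions eventually coincide, which holds because their difference is a finite $4$-ary fraction. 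Thus no carry can propagate indefinitely, the tails of the expansions of $x_1$, $x_2$, $x_3$ are all tails of the expansion of $c$, and richness is inherited. This completes the proof.
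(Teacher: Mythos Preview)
Your proposal is correct and follows essentially the same route as the paper: verify that $x_1,x_2,x_3$ lie in the required subintervals, observe that adding or subtracting a terminating $4$-ary fraction to $c$ alters only finitely many leading digits (so the relevant combinatorial property of the expansion is inherited), and then invoke Corollary~\ref{cor1}. The only cosmetic difference is that the paper phrases the tail argument via \emph{normality} of $c$ in base~$4$, whereas you use \emph{richness}; both are tail properties preserved under finite digit changes, and richness is in fact the more direct choice since Corollary~\ref{cor1} only requires that every length-$3$ word occur.
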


\tdplotsetmaincoords{60}{125}
\tdplotsetrotatedcoords{0}{0}{0} 
\begin{figure}[h!]
 \begin{minipage}{0.48\textwidth}
      \centering
 \begin{tikzpicture}[scale=4,tdplot_rotated_coords,
                    cube/.style={very thick,black},
                   grid/.style={very thin,gray},
                 axis/.style={->,black,  thick},
                    rotated axis/.style={->,purple,  thick}]

    \draw[very thick, axis,tdplot_main_coords] (0,0,0) -- (1.2,0,0) node[anchor=north]{$v_1$};
    \draw[very thick, axis,tdplot_main_coords] (0,0,0) -- (0,1.2,0) node[anchor=north west]{$v_2$};
    \draw[very thick, axis,tdplot_main_coords] (0,0,0) -- (0,0,1.2) node[anchor=west]{$v_3$};

\draw[very thick] (1,0,0)--(0,1,0)--(0,0,1)--cycle;

\begin{scope}[blue, very thick,decoration={
    markings,
    mark=at position 0.75 with {\arrow[blue,scale=1.2]{latex}}}
    ] 
\draw[postaction={decorate}] ( 0.4, 0.6, 0 ) -- ( 0, 0.8, 0.2 ); 
\draw[postaction={decorate}] ( 0, 0.8, 0.2 ) -- (0.4, 0, 0.6 ); 
\draw[postaction={decorate}] ( 0.4, 0, 0.6 ) -- ( 0, 0.2, 0.8 ); 
\draw[postaction={decorate}] ( 0, 0.2, 0.8 )--(0.4, 0.6, 0);
 \end{scope}
\begin{scope}[blue, thin,decoration={
    markings,
    mark=at position 0.65 with {\arrow[blue,scale=1.2]{latex}}}
    ] 
\end{scope}
\begin{scope}[red ,thick, decoration={
    markings,
    mark=at position 0.37 with {\arrow[red,scale=1.2]{latex}}}
    ] 
  \draw[postaction={decorate}] ( 0.11, 0, 0.89 ) -- (0.555, 0.445, 0 ); 
\draw[postaction={decorate}] ( 0.555, 0.445, 0) -- ( 0, 0.7225, 0.2775); 
\draw[postaction={decorate}] ( 0, 0.7225, 0.2775 ) -- ( 0.36125, 0,0.63875 ); 
\draw[postaction={decorate}] ( 0.36125, 0,0.63875 ) -- ( 0, 0.180625, 0.819375 ); 
\end{scope}
\begin{scope}[red ,thick, decoration={
    markings,
    mark=at position 0.8 with {\arrow[red,scale=1.2]{latex}}}
    ] 

\end{scope}
\begin{scope}
 \draw (0.11, 0, 0.89) node[left] {$\mathbf{v}(0)$};
 \path (0.11, 0, 0.89) node[circle, red, fill, inner sep=1]{};
    
 \end{scope}  
 \draw node at (-0.2,0.2,-0.65) {$\mathbf{v}(0)=(0.11,\,0,\, 0.89)$};
 
\end{tikzpicture}
 \end{minipage}
  \begin{minipage}{0.48\textwidth}
  \centering
 \begin{tikzpicture}[scale=4,tdplot_rotated_coords,
                    cube/.style={very thick,black},
                   grid/.style={very thin,gray},
                 axis/.style={->,black,  thick},
                    rotated axis/.style={->,purple,  thick}]

    \draw[very thick, axis,tdplot_main_coords] (0,0,0) -- (1.2,0,0) node[anchor=north]{$v_1$};
    \draw[very thick, axis,tdplot_main_coords] (0,0,0) -- (0,1.2,0) node[anchor=north west]{$v_2$};
    \draw[very thick, axis,tdplot_main_coords] (0,0,0) -- (0,0,1.2) node[anchor=west]{$v_3$};

\draw[very thick] (1,0,0)--(0,1,0)--(0,0,1)--cycle;
  \draw ( 0.8,0,0.2) node[left] {$\mathbf{v}(0)$};
   \path (0.8,0,0.2) node[circle, red, fill, inner sep=1]{};

 \begin{scope}[red ,   thick, decoration={
    markings,
    mark=at position 0.3 with {\arrow[red,scale=1.2]{latex}}}
    ] 
  \draw[postaction={decorate}] ( 0.8,0,0.2) -- ( 0, 0.4, 0.6); 
\draw[postaction={decorate}] ( 0, 0.4, 0.6 ) -- ( 0.3, 0.7, 0 ); 
\draw[postaction={decorate}] ( 0.3, 0.7, 0 ) -- (0, 0.85, 0.15); 
\draw[postaction={decorate}] ( 0, 0.85, 0.15 ) -- (0.425, 0, 0.575); 
\draw[postaction={decorate}] ( 0.425, 0, 0.575 ) -- (0, 0.2125, 0.7875); 

\end{scope}

 \begin{scope}[blue, very thick,decoration={
    markings,
    mark=at position 0.65 with {\arrow[blue,scale=1.2]{latex}}}
    ] 
\draw[postaction={decorate}] ( 0.4, 0.6, 0 ) -- ( 0, 0.8, 0.2 ); 
\draw[postaction={decorate}] ( 0, 0.8, 0.2 ) -- (0.4, 0, 0.6 ); 
\draw[postaction={decorate}] ( 0.4, 0, 0.6 ) -- ( 0, 0.2, 0.8 ); 
\draw[postaction={decorate}] ( 0, 0.2, 0.8 )--(0.4, 0.6, 0);
 \end{scope}
\begin{scope}[blue, thin,decoration={
    markings,
    mark=at position 0.65 with {\arrow[blue,scale=1.2]{latex}}}
    ] 
    
\end{scope}
 \draw node at (-0.2,0.2,-0.65) {$\mathbf{v}(0)=( 0.8,0,0.2)$};
 
\end{tikzpicture}

 \end{minipage}
 \caption{Some orbits  of the switched server system with the parameters of Corollary \ref{cor2}.}\label{fig2}
 \end{figure}
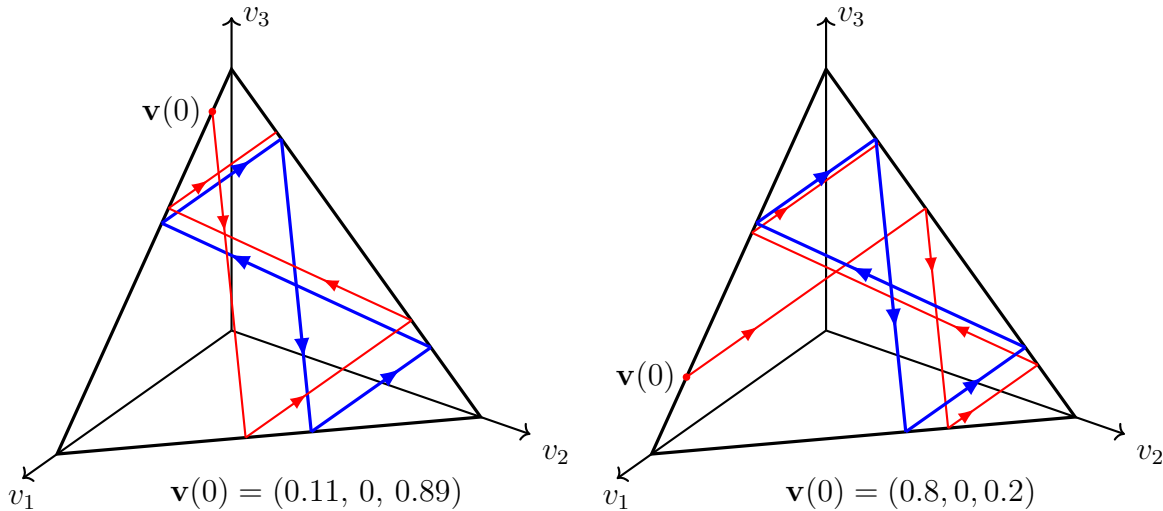

 In Figure \ref{fig2} we display the result of some computational simulations of the switched server system considering the parameter values of Corollary \ref{cor2}. The results are in agreement with the claim of Corollary \ref{cor2} since the orbits $t\in [0,\infty)\mapsto  \mathbf{v}(t)$ with $\mathbf{v}(0)=(0.11, \, 0, \, 0.89)$ and $\mathbf{v}(0)=(0.8, \, 0, \, 0.2)$, drawn in red, converge to a periodic (therefore non-fractal) attractor, drawn in blue. More precisely, the $\omega$-limit set of any ${\mathbf v}(0)$ in $\{(0.11,\, 0,\, 0.89), (0.8, \, 0, \, 0.2 )\}$ under the Poincar\'e map $F:\partial\Delta\to\partial\Delta$ 
  is the finite set
 $$ \omega({\mathbf v}(0),F)=\{(0.4, \, 0.6,\, 0),  (0,\,0.8,\,0.2), (0.4,\, 0,\, 0.6), (0,\,0.2,\,0.8)\}.$$

\section{Proofs of the results}

 \begin{definition}[$f$-invariant quasi-partition]\label{iqp} Let $f:I\to I$ be a self-map of  $I$ with a finite set of discontinuities $D\subset I$. We say that
 a finite collection of pairwise disjoint open subintervals $J_1, J_2,\ldots, J_m$ of $I{\setminus}D$ is a \textit{$f$-invariant quasi-partition} if
 $I{\setminus}\bigcup_{i=1}^m J_i$ is a finite set and there exists a self-map $\tau$ of $\{1,2,\ldots,m\}$ such that
 $f(J_i)\subseteq J_{\tau(i)}$ for all $1\le i\le m$.
 \end{definition}
 
 \begin{lemma}\label{lem2.4} Assume that there exists a $f$-invariant quasi-partition for a piecewise \linebreak $\lambda$-affine contraction  $f:I\to I$. Then the set $\omega(f)$, the global attractor of $f$, is finite.
  \end{lemma}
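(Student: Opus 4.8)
The plan is to play the finiteness of the combinatorial map $\tau$ against the contraction constant $|\lambda|<1$. I would begin with a preliminary remark on the pieces $J_i$: since $J_i$ is a connected subset of $I\setminus D$, it contains no discontinuity point of $f$; as $f$ is affine of slope $\lambda$ on each defining interval $[x_{j-1},x_j)$ and the branches glue continuously at any breakpoint interior to $J_i$, the restriction $f|_{J_i}$ is a single affine function of slope $\lambda$, say $f(x)=\lambda x+c_i$ on $J_i$.

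Next I would analyse one $\tau$-periodic index $i$, of period $p$. Iterating the inclusions $f(J_j)\subseteq J_{\tau(j)}$ shows that $f^{p}$ restricted to $J_i$ is the composition of the affine branches $f|_{J_i},f|_{J_{\tau(i)}},\dots,f|_{J_{\tau^{p-1}(i)}}$, hence affine of slope $\lambda^{p}$; let $g_i\colon\mathbb{R}\to\mathbb{R}$ be its affine extension. From $f^{p}(J_i)\subseteq J_i$ and the affineness of $g_i$ we get $g_i(\overline{J_i})=\overline{f^{p}(J_i)}\subseteq\overline{J_i}$, so $g_i$ is a $|\lambda|^{p}$-Lipschitz self-map of the compact interval $\overline{J_i}$; Banach's fixed point theorem gives a unique fixed point $\xi_i\in\overline{J_i}$ with $g_i^{\,k}\to\xi_i$. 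A short induction using $g_i(J_i)\subseteq J_i$ yields $f^{pk}(x)=g_i^{\,k}(x)$ for every $x\in J_i$, hence $f^{pk}(x)\to\xi_i$; and since $f^{pk}(x)\in J_i$ we have $f^{pk+1}(x)=\lambda f^{pk}(x)+c_i$, and, continuing around the cycle, $f^{pk+t}(x)\to\xi_i^{(t)}$ where $\xi_i^{(0)}=\xi_i$ and $\xi_i^{(t+1)}=\lambda\xi_i^{(t)}+c_{\tau^{t}(i)}$. Thus $\omega(x,f)=\{\xi_i^{(0)},\dots,\xi_i^{(p-1)}\}$ for every $x\in J_i$, a finite set depending only on $i$.

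I would then treat an arbitrary $x\in I$ by a dichotomy. If some iterate $f^{k}(x)$ lies in $\bigcup_{j}J_j$, then, because $f(J_j)\subseteq J_{\tau(j)}$ and the $\tau$-orbit of any index eventually reaches a $\tau$-periodic one, there are $N$ and a $\tau$-periodic $i$ with $f^{N}(x)\in J_i$; since the $\omega$-limit set only sees the tail of the orbit, $\omega(x,f)=\omega(f^{N}(x),f)=\{\xi_i^{(0)},\dots,\xi_i^{(p_i-1)}\}$ by the previous step. Otherwise the whole forward orbit of $x$ remains in the finite set $F:=I\setminus\bigcup_{j=1}^{m}J_j$, hence is eventually periodic and $\omega(x,f)\subseteq F$. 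Combining the two cases,
$$\omega(f)=\bigcup_{x\in I}\omega(x,f)\ \subseteq\ F\ \cup\!\!\bigcup_{i\ \tau\text{-periodic}}\!\!\{\xi_i^{(0)},\dots,\xi_i^{(p_i-1)}\},$$
and the right-hand side is finite: $F$ is finite by hypothesis, and there are at most $m$ periodic indices, each contributing at most $m$ points.

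The induction $f^{pk}=g_i^{\,k}$ on $J_i$ and the recursion for the $\xi_i^{(t)}$ are routine. The point needing care is the behaviour at $\partial J_i$: the fixed point $\xi_i$ may lie on the boundary of $J_i$, where $f$ itself can be discontinuous, so throughout one must argue with the continuous affine extension $g_i$ (and the formula $\lambda y+c_i$, valid on all of $J_i$) rather than with $f$ at that boundary point — harmless, since every tracked orbit stays strictly inside its $J_i$, but worth stating. I expect the genuine content of the lemma to be the final reduction: not merely that each $\omega(x,f)$ is finite (a cycle of affine contractions), but that the relevant cycles are indexed by the finitely many periodic orbits of $\tau$, which is what makes the union over $x$ finite.
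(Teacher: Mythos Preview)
Your proof is correct and follows essentially the same route as the paper: both arguments split into the dichotomy ``some iterate enters $\bigcup_j J_j$'' versus ``the orbit stays in the finite complement $I\setminus\bigcup_j J_j$'', then use that $\tau$-orbits eventually reach a periodic index, and finally exploit $|\lambda|<1$ to squeeze each periodic piece down to a point. The only cosmetic difference is that the paper phrases the last step as a nested intersection of compact intervals $\bigcap_{j\ge 0}\overline{f^{jq}(J_i)}=\{c_i\}$ (Cantor intersection), whereas you invoke Banach's fixed point theorem for the affine extension $g_i$ and then read off the explicit cycle $\{\xi_i^{(0)},\dots,\xi_i^{(p-1)}\}$; these are equivalent formulations of the same contraction argument.
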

  \begin{proof} Let $J_i=(a_{i-1},a_i)$, $1\le i \le m$, be a $f$-invariant quasi-partition of $I$ and $\tau$ be the self-map of $\{1,2,\ldots,m\}$ as in Definition \ref{iqp}. Let $P\subseteq \{1,2,\ldots,m\}$ denote the set of periodic points of $\tau$ and let $f^{(0)}$ denote the identity map of $I$. Let
  \begin{equation}\label{FG}
  F = \bigcap_{k\ge 0} \bigcup_{i\in P} \overline{f^k(J_i)}, \quad G= [0,1]{\big \backslash} \bigcup_{i=1}^m J_i.
  \end{equation}
  We will split the proof in three claims:\\
     
  \noindent Claim A. The set $\omega(f)$ is contained in $F\cup G$.\\    
  
  In fact, let $x$ be in $I$. Then, either $\omega(x,f)\subset G$ or  there exist $i$ in $P$ and a non-negative integer $k$ such that
  $f^k(x)$ is in $J_i$. Then, by (\ref{olset}) and (\ref{FG}), we have that $\omega(x,f)=\omega\big(f^k(x),f\big)\subseteq \overline{F}$. Since $$
  \bigcup_{i\in P} \overline{f^{k+1}(J_i)}\subseteq \bigcup_{i\in P} \overline{f^{k}(J_{\tau(i)})}= \bigcup_{i\in P} \overline{f^k(J_i)},$$
  we have that the  sequence of sets $ \left\{\bigcup_{i\in P} \overline{f^k(J_i)}\right\}_{k\ge 0}$ in $(\ref{FG})$ is a nested sequence of compact sets, thus $F$ is a compact non-empty set. Therefore, $\overline{F}=F$, implying that $\omega(x,f)$ is in $F$. In either case, $\omega(x,f)\in F\cup G$.
     \\

  \noindent Claim B. We have $F = \bigcap_{j\ge 0} \bigcup_{i\in P} \overline{f^{j q}(J_i)}$, where    
  the positive integer $q$  
  is such that $\tau^{q}$ is the identity map on $P$.\\
  
  It follows from the fact that the sequence of sets $ \left\{\bigcup_{i\in P} \overline{f^k(J_i)}\right\}_{k\ge 0}$ in $(\ref{FG})$ is nested.\\
   
  \noindent Claim C. The set $F\cup G$ is a finite set.\\     
  
The set  $G$ is finite  
because $J_1, J_2, \ldots, J_m$ is a $f$-invariant quasi-partition. Let us prove now that $F$ is finite. By Definition \ref{iqp}, $J_i\cap D$ is empty for all $i$,   
where $D$ is the set of discontinuities of $f$. Yet, since $f^k(J_i)\subseteq J_{\tau^k(i)}$, we have that $f^k(J_i)\cap D$ is empty for all $i$.  
Hence, because $f$ is a piecewise $\lambda$-affine map continuous on $I{\setminus}D$ we conclude that ${f^k(J_i)}$ is an interval of length less than or equal to $\lambda^k$ for all $k\ge 0$.  Since
   $$f^{(j+1)q}(J_i)\subseteq f^{jq}(J_{\tau^q(i)})=f^{jq}(J_i),$$
   we have that $\left\{\overline{f^{jq}(J_{i})} \right\}_{j\ge 0}$ is a nested sequence of compact intervals. Therefore, there exists a point $c_i$ in $[0,1]$ such that   
$\{c_i\}=\bigcap_{j\ge 0} \overline{f^{jq}(J_i)}$. Moreover, as already pointed out, $\overline{f^{jq}(J_i)}$ is an interval   
of length less or equal to $\lambda^{jq}$ containing $c_i$. By Claim B, we conclude that $F=\{c_1,\ldots,c_m\}$, where it may occur that $c_i=c_j$ for some $i\neq j$. In this way, $F$ is a finite set.
  
     \end{proof}

\begin{proof}[Proof of Theorem \ref{thm1}] We assume  all the hypotheses and notation in the statement of Theorem \ref{thm1}. We begin by observing that the map $f$ is injective. In fact, suppose that
$f(x)=f(y)$. Then $\vert x-y\vert = \left\vert\alpha_j-\alpha_i\right\vert.$
 In particular, $\vert x- y\vert$ is an integer  
 and $0\le \vert x-y\vert <1$, that is, $x=y$. In this way, if $x$ is in $[0,1)$ and $k$ is an integer,    
 then, by the injectivity of $f$, the set $f^{-k}(\{x\})=\{y\in I: f^k(y)=x\}$ is either 
 a one-point-set or empty.    
 We  affirm that:\\

\noindent Claim D.  For each $1\le i\le n-1$, there exists a positive integer $k_i$    
such that $$\bigcup_{k=0}^\infty f^{-k}(\{x_i\})=\bigcup_{k=0}^{k_i} f^{-k}(\{x_i\}).$$

First  assume that Claim D holds. 
Without any restriction, we assume that $k_i$ is minimal for the property of the claim.
Then, for each $0\le k\le k_i$, the set $f^{-k}(\{x_i\})$ has a unique element, which we denote by
$f^{-k}(x_i)$, thus $\{f^{-k}(x_i)\}=f^{-k}(\{x_i\})$. Let
$$ H= \left\{f^{-k}(x_i):0\le k\le k_i\,\,\textrm{and}\,\, 1\le i\le n-1\right\}.
$$
Let $J_1,J_2,\ldots,J_m$ be the connected components (open intervals) of $(0,1){\backslash} H$. It is clear that no point of $\bigcup_{s=1}^m J_s$ can be mapped into a point of $H$. In particular, $f(J_s)$ is an interval that contains no point of $H$, i.e. $f(J_s)\subseteq J_{\tau(s)}$ for some $\tau(s)$ in $\{1,2,\ldots,m\}$. In this way, $J_1,J_2,\ldots,J_m$ is an $f$-invariant quasi-partition. Applying Lemma \ref{lem2.4} we conclude that Theorem \ref{thm1} holds provided we prove Claim D.

Now it remains to prove Claim D. Let $i$ be in $\{1, 2,\ldots , n-1\}$. If $f^{-k}(\{x_i\})$ is the empty-set for some
positive integer $k$, then Claim $D$ holds for such $i$. Otherwise, by the injectivity of $f$, we have  that $f^{-k}(\{x_i\})$ is a one-point-set for all positive   
integers $k$.   
As before, let $f^{-k}(x_i)$ denote the unique element of the set $f^{-k}(\{x_i\})$.
Then $f^{-k}(x_i)$ is in $f(I)$ for all integers $k\ge 0$.

In the sequel, let $k\ge 0$ be fixed. Since $f^{-k}(x_i)$ is in $f(I)$, we know that there exist 
$j$ in $\{1, \ldots n\}$ and $y_i$ in $[x_{j-1},x_j)$ such that   
\begin{equation}\label{fab1}
f^{-k}(x_i)= f(y_i)= \frac{1}{\beta} y_i + \frac{\alpha_j-1}{\beta}\cdot
\end{equation}
In this way,  we have that 
\begin{equation}\label{vd}    f^{-k}(x_i)\in \left[ \frac{\alpha_j-1}{\beta}, \frac{\alpha_j}{\beta}\right) \quad\textrm{and}\quad f^{-(k+1)}(x_i)=   y_i = \beta f^{-k}(x_i) + 1 - \alpha_j.
\end{equation}
We may rewrite \eqref{vd} as
\begin{equation}\label{eq4}  f^{-(k+1)} (x_i) = T_{\beta} \big(f^{-k}(x_i)\big),\end{equation}
where $T_{\beta}:I\to I$ is the $\beta$-transformation $x\mapsto \{\beta x\}$ or, equivalently,
\begin{equation}\label{btransformation} 
T_{\beta}(x)=\beta x +1 - r\quad\textrm{for all}\quad x \in \left[\frac{r-1}{\beta}, \frac{r}{\beta} \right)\quad\textrm{and}\quad r\in \{1,2,\ldots,\beta\}.
\end{equation}
Since $k\ge 0$ is arbitrary in \eqref{eq4}, by induction on $k\ge 0$, we obtain that
\begin{equation}\label{contr1}
\{T_{\beta}^k(x_i): k\ge 0\}=\left\{ f^{-k}(x_i): k\ge 0\right\}\subset f(I).
\end{equation}
Now we have two cases to consider. \\

Case (a): $x_i\in\mathbb{Q}$

In this case, the $\beta$-ary expansion $w$ of $x_i$ is ultimately periodic meaning that the $T_{\beta}$-orbit of $x_i$ is finite. Then, by (\ref{contr1}), Claim D holds for such $i$.\\

Case (b): $x_i\in S_{\beta,n}$

In this case, since $f$ is a piecewise $\frac{1}{\beta}$-affine contraction, we have that $I{\setminus} f(I)$ contains an open interval $J$ of length $\vert J\vert=(1-\beta^{-1})/(n+1)$. By the definition of the integer $\ell$ in  Section 2, we have that $\beta^{-\ell}< \frac12 \vert J\vert$. Hence, there exists $0\le p\le \beta^{\ell}-1$ such that
$\left[ \dfrac{p}{\beta^{\ell}}, \dfrac{p+1}{\beta^\ell} \right]\subset J$. Since $x_i$ is in $S_{\beta,n}$,  
the $T_{\beta}$-orbit of $x_i$ visits the intervals $\left[ \dfrac{r}{\beta^{\ell}}, \dfrac{r+1}{\beta^\ell} \right]$ for all $0\le r\le \beta^\ell-1$.  In particular, there exists a positive integer $k'$  
such that $T_{\beta}^{k'}(x_i)$ is in $J\subset I{\setminus} f(I)$,    
which contradicts $(\ref{contr1})$.
\end{proof} 

\begin{lemma}\label{2TT} Given an integer $\beta\ge 2$, 
let $T_{\beta}:I\to I$ and $T_{-\beta}:I\to I$ be the transformations defined by $T_{\beta}(x)=\{\beta x\}$ and 
$T_{-\beta}(x)=\{-\beta x\}$, respectively.  
Then $\big(T_{-\beta}\big)^2(x)=T_{\beta^2}(x)$ for all $x\in I$.
\end{lemma}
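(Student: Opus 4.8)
The plan is to prove the identity directly from the definition of the fractional part, reducing everything to the elementary fact that $\{u+n\}=\{u\}$ for every real number $u$ and every integer $n$. Note first that both sides are genuine self-maps of $I=[0,1)$: the fractional part of any real number lies in $[0,1)$, so $T_{-\beta}$ (hence $(T_{-\beta})^2$) and $T_{\beta^2}$ all send $I$ into $I$, and it remains only to verify pointwise equality on $I$.

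The heart of the proof is the following short computation. Fix $x\in I$ and, since $\{-\beta x\}$ and $-\beta x$ differ by an integer, write $\{-\beta x\}=-\beta x+m$ with $m=-\lfloor-\beta x\rfloor\in\mathbb{Z}$. Then
$$(T_{-\beta})^2(x)=T_{-\beta}\big(\{-\beta x\}\big)=\big\{-\beta\{-\beta x\}\big\}=\big\{-\beta(-\beta x+m)\big\}=\big\{\beta^2 x-\beta m\big\}.$$
Because $\beta\in\mathbb{Z}$ and $m\in\mathbb{Z}$, the number $\beta m$ is an integer, so $\{\beta^2 x-\beta m\}=\{\beta^2 x\}=T_{\beta^2}(x)$. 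As $x\in I$ was arbitrary, this establishes the lemma.

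I do not anticipate a genuine obstacle here; the only thing requiring care is the bookkeeping of integer translates, together with the observation that working through the representation $\{-\beta x\}=-\beta x+m$ avoids any case distinction (for instance, according to whether $\{\beta x\}$ vanishes, or according to which of the $\beta$ pieces of the piecewise-affine description $(\ref{btransformation})$ contains $x$). As an optional sanity check one could instead verify the identity on the $\beta^2$ half-open subintervals $[\,p/\beta^2,(p+1)/\beta^2)$, $0\le p\le\beta^2-1$, on each of which $T_{\beta^2}$ is the affine map $x\mapsto\beta^2 x-p$, and compare this with the two-step affine formula obtained by applying twice the analogue of $(\ref{btransformation})$ with $-\beta$ in place of $\beta$; this yields the same conclusion but is considerably more cumbersome.
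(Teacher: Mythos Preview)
Your proof is correct and is genuinely more streamlined than the paper's. The paper argues by cases: after disposing of $x=0$, it splits the remaining points of $I$ into those lying strictly inside one of the $\beta^2$ open subintervals $\bigl(\tfrac{r-1}{\beta}+\tfrac{p-1}{\beta^2},\,\tfrac{r-1}{\beta}+\tfrac{p}{\beta^2}\bigr)$ (Case~(i)) and those equal to an interior grid point $\tfrac{r-1}{\beta}+\tfrac{p}{\beta^2}$ with $1\le p\le\beta-1$ (Case~(ii)), and in each case computes both sides via the explicit piecewise-affine formulas for $T_{-\beta}$ and $T_{\beta^2}$. Your argument bypasses all of this by working directly with the defining identity $\{u+n\}=\{u\}$ for integers $n$: writing $\{-\beta x\}=-\beta x+m$ and using that $\beta m\in\mathbb{Z}$ collapses the computation to one line, with no need to track which branch is active. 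The paper's approach has the minor advantage of making the affine formula on each piece visible (which matches the style of the surrounding arguments using \eqref{btransformation}), but yours is cleaner, requires no boundary-point bookkeeping, and makes transparent why the integrality of $\beta$ is exactly what is needed.
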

\begin{proof} It is clear that the equality holds for $x=0$. Otherwise, we have two cases to consider. Case $(i):$ there exist $r,p\in \{1,2,\ldots,\beta\}$ such that
$$x\in \left(\dfrac{r-1}{\beta}+\dfrac{p-1}{\beta^2},\dfrac{r-1}{\beta} + \dfrac{p}{\beta^2} \right)\subset\left( \dfrac{r-1}{\beta}, \dfrac{r}{\beta} \right).$$
Hence,
$$ T_{-\beta}(x)=-\beta x +  r,\quad\textrm{thus}\quad T_{-\beta}(x)\in \left(\dfrac{\beta-p}{\beta}, \dfrac{\beta-p+1}{\beta}
\right).$$
Applying the same reasoning to the point $T_{-\beta}(x)$ yields
$$ \big(T_{-\beta}\big)^2(x)=-\beta (-\beta x +r) +\beta-p+1 =\beta^2 x -\beta r +\beta-p+1.
$$
Likewise, since
$$x\in \left(\dfrac{\beta(r-1)+p-1}{\beta^2},\dfrac{\beta(r-1)+p}{\beta^2}  \right),$$
we have that
$$ T_{\beta^2}(x)=\beta^2 x + 1-\left(\beta (r-1) + p \right)=\beta^2 x -\beta r +\beta -p+1.
$$
Therefore, 
 \begin{equation}\label{2ig}
 \big(T_{-\beta}\big)^2(x)= T_{\beta^2}(x).
 \end{equation}
Case $(ii):$ $x=\dfrac{r-1}{\beta}+\dfrac{p}{\beta^2}$ for some $r\in \{1,2,\ldots,\beta\}$ and $p\in \{1,2,\ldots,\beta-1\}$.
 
In this case, we have that $$\big(T_{-\beta}\big)^2(x)=T_{-\beta}(-\beta x+r)=T_{-\beta}\left(\dfrac{\beta-p}{\beta}\right)=-\beta \left (\dfrac{\beta-p}{\beta}\right)+\beta-p=0=T_{\beta^2}(x).$$
\end{proof}
\begin{proof}[Proof of Theorem \ref{thm2}] The proof is exactly the same as the proof of Theorem \ref{thm1} until before equation (\ref{fab1}). From that point on, the proof should be modified as follows.

In the sequel, let $k\ge 0$ be fixed. Since $f^{-k}(x_i)$ is in $f(I)$, we know that there exist 
$j$ in $\{1, \ldots n\}$ and $y_i$ in $[x_{j-1},x_j)$ such that     
\begin{equation}\label{220}
f^{-k}(x_i)= f(y_i)= -\frac{1}{\beta} y_i + \frac{\alpha_j}{\beta}\cdot
\end{equation}
We affirm that $y_i\neq 0$. In fact, if $y_i=0$, then $0=y_i=f^{-k+1}(x_i)\in f(I)$, which contradicts the fact that $0\not\in f(I)$. Hence, $0<y_i<1$. By (\ref{220}), we have that 
\begin{equation}\label{221}  f^{-k}(x_i)\in \left( \frac{\alpha_j-1}{\beta}, \frac{\alpha_j}{\beta}\right) \quad\textrm{and}\quad f^{-(k+1)}(x_i)=   y_i = -\beta f^{-k}(x_i)  + \alpha_j.
\end{equation}
We may rewrite (\ref{221}) as
\begin{equation}\label{224}  f^{-(k+1)} (x_i) = T_{-\beta} \big(f^{-k}(x_i)\big),
\end{equation}
where $T_{-\beta}:I\to I$ is the transformation $x\mapsto \{-\beta x\}$, or equivalently, $T_{-\beta}(0)=0$ and   
$$ T_{-\beta}(x)=-\beta x + r\quad\textrm{for all}\quad x \in \left(\frac{r-1}{\beta}, \frac{r}{\beta} \right]\cap I\quad\textrm{and}\quad r\in \{1,2,\ldots,\beta\}.
$$
Since $k\ge 0$ is arbitrary in \eqref{224}, by induction on $k\ge 0$, we obtain that
\begin{equation}\label{225}
 \{T_{-\beta}^k(x_i): k\ge 0\}=\left\{ f^{-k}(x_i): k\ge 0\right\}\subset f(I).
\end{equation}
By Lemma \ref{2TT} and by (\ref{225}), we reach
\begin{equation}\label{226}
  \left\{\big(T_{\beta^2}\big)^k(x_i): k\ge 0\right\}=  \left\{\big(T_{-\beta}\big)^{2k}(x_i): k\ge 0\right\}\subset f(I).
\end{equation}
Now we have two cases to consider. \\

Case (a): $x_i\in\mathbb{Q}$.

In this case, the $\beta^2$-ary expansion $w$ of $x_i$ is ultimately periodic meaning that the $T_{\beta^2}$-orbit of $x_i$ is finite. Then, by (\ref{226}), Claim D holds for such $i$.
\\

Case (b): $x_i\in S_{\beta,n}'$

In this case, since $f$ is a piecewise $-\frac{1}{\beta}$-affine contraction, we have that $I{\setminus} f(I)$ contains an open interval $J$ of length $\vert J\vert=(1-\beta^{-1})/(n+1)$. By the definition of the integer $\ell'$ in  Section $2$, we have that $\beta^{-2\ell'}< \frac12 \vert J\vert$. Hence, there exists $0\le p\le \beta^{2\ell'}-1$ such that
$\left[ \dfrac{p}{\beta^{2\ell'}}, \dfrac{p+1}{\beta^{2\ell'}} \right]\subset J$. Since $x_i$ is in $S'_{\beta,n}$,  
the $T_{\beta^2}$-orbit of $x_i$ visits the intervals $\left[ \dfrac{r}{\beta^{2\ell'}}, \dfrac{r+1}{\beta^{2\ell'}} \right]$ for all $0\le r\le \beta^{2\ell'}-1$.  In particular, there exists a positive integer $k'$  
such that $T_{\beta^2}^{k'}(x_i)$ is in $J\subset I{\setminus} f(I)$,    
which contradicts $(\ref{226})$.
\end{proof}

\begin{proof}[Proof of Corollary \ref{cor1}] Let $(d_1,d_2,d_3)$ be as in the statement of Corollary \ref{cor1}. The  Poincar\'e map $F:\partial\Delta\to\partial\Delta$ of any switched server system with parameters $d_{ij}$ satisfying \eqref{d1d2d3} is topologically conjugate to the map to the map $f$ defined by $(\ref{fd1d2d3})$. Moreover, given $1\le i\le 3$, either $x_i\in\mathbb{Q}$ or the $4$-ary expansion of $x_i$ contain all words of length $3$ over $\{0,1,2,3\}$ and thus $x_i\in S_{2,4}'$. By Theorem \ref{thm2}, the map $f$ (and therefore $F$) has a global finite attractor, which concludes the proof.
\end{proof}

\begin{proof}[Proof of Corollary \ref{cor2}] The base-$4$ Champernowne constant $c$ is a normal number in base $4$. Since the $4$-ary expansions of $x_1$, $x_2$ and $x_3$ equal the $4$-ary expansion of $c$, up to finitely many digits, we have that $x_1$, $x_2$ and $x_3$ are normal numbers in base $4$.  Now the result follows from Corollary \ref{cor1}.
\end{proof}

\noindent\textbf{Acknowledgments.} Part of this work was carried out while the first named author had a postdoctoral position in the University of S\~ao Paulo at Ribeir\~ao Preto. He is very thankful for the excellent working conditions there. The third named author was partially supported by grant {\#}2019/10269-3 S\~ao Paulo Research Foundation (FAPESP).


\begin{thebibliography}{10}

\bibitem{MR1240802}
Yann Bugeaud.
\newblock Dynamique de certaines applications contractantes, lin\'{e}aires par
  morceaux, sur {$[0,1)$}.
\newblock {\em C. R. Acad. Sci. Paris S\'{e}r. I Math.}, 317(6):575--578, 1993.

\bibitem{MR2953186}
Yann Bugeaud.
\newblock {\em Distribution modulo one and {D}iophantine approximation}, volume
  193 of {\em Cambridge Tracts in Mathematics}.
\newblock Cambridge University Press, Cambridge, 2012.

\bibitem{MR2990128}
Yann Bugeaud.
\newblock On the expansions of a real number to several integer bases.
\newblock {\em Rev. Mat. Iberoam.}, 28(4):931--946, 2012.

\bibitem{MR1683629}
Yann Bugeaud and Jean-Pierre Conze.
\newblock Calcul de la dynamique de transformations lin\'{e}aires contractantes
  mod 1 et arbre de {F}arey.
\newblock {\em Acta Arith.}, 88(3):201--218, 1999.

\bibitem{ACECOG2020}
Alfredo Calderon, Eleonora Catsigeras, and Pierre Guiraud.
\newblock A spectral decomposition of the attractor of piecewise-contracting
  maps of the interval.
\newblock {\em Ergodic Theory Dynam. Systems}, 2020.

\bibitem{MR1201496}
Christopher Chase, Joseph Serrano, and Peter~J. Ramadge.
\newblock Periodicity and chaos from switched flow systems: contrasting
  examples of discretely controlled continuous systems.
\newblock {\em IEEE Trans. Automat. Control}, 38(1):70--83, 1993.

\bibitem{MR4120256}
Filipe Fernandes and Benito Pires.
\newblock A switched server system semiconjugate to a minimal interval
  exchange.
\newblock {\em European J. Appl. Math.}, 31(4):682--708, 2020.

\bibitem{p1}
Jos\'e~Pedro Gaiv\~ao and Arnaldo Nogueira.
\newblock Dynamics of piecewise increasing contractions.
\newblock {\em Preprint at https://arxiv.org/pdf/2007.08014.pdf}, 2021.

\bibitem{MR1018928}
Jean-Marc Gambaudo and Charles Tresser.
\newblock On the dynamics of quasi-contractions.
\newblock {\em Bol. Soc. Brasil. Mat.}, 19(1):61--114, 1988.

\bibitem{MR1639533}
Peter Hertling.
\newblock Disjunctive {$\omega$}-words and real numbers.
\newblock {\em J.UCS}, 2(7):549--568, 1996.

\bibitem{MR4030545}
Svante Janson and Anders \"{O}berg.
\newblock A piecewise contractive dynamical system and {P}hragm\'{e}n's
  election method.
\newblock {\em Bull. Soc. Math. France}, 147(3):395--441, 2019.

\bibitem{MR3815128}
Michel Laurent and Arnaldo Nogueira.
\newblock Rotation number of contracted rotations.
\newblock {\em J. Mod. Dyn.}, 12:175--191, 2018.

\bibitem{MR3394114}
Arnaldo Nogueira and Benito Pires.
\newblock Dynamics of piecewise contractions of the interval.
\newblock {\em Ergodic Theory Dynam. Systems}, 35(7):2198--2215, 2015.

\bibitem{MR3225875}
Arnaldo Nogueira, Benito Pires, and Rafael~A. Rosales.
\newblock Asymptotically periodic piecewise contractions of the interval.
\newblock {\em Nonlinearity}, 27(7):1603--1610, 2014.

\bibitem{MR3820005}
Arnaldo Nogueira, Benito Pires, and Rafael~A. Rosales.
\newblock Topological dynamics of piecewise {$\lambda$}-affine maps.
\newblock {\em Ergodic Theory Dynam. Systems}, 38(5):1876--1893, 2018.

\bibitem{MR4030596}
Benito Pires.
\newblock Symbolic dynamics of piecewise contractions.
\newblock {\em Nonlinearity}, 32(12):4871--4889, 2019.

\bibitem{MR1895809}
Ludwig Staiger.
\newblock How large is the set of disjunctive sequences?
\newblock volume~8, pages 348--362. 2002.
\newblock Advances and trends in automata and formal languages.

\end{thebibliography}
\end{document}